\newtheorem{theorem}{Theorem}[section]
\newtheorem{proposition}[theorem]{Proposition}
\newtheorem{corollary}[theorem]{Corollary}
\newtheorem{definition}[theorem]{Definition}
\newtheorem{example}[theorem]{Example}
\newcommand{\bitem}{\begin{itemize}}
\newcommand{\eitem}{\end{itemize}}
\newcommand{\benum}{\begin{enumerate}}
\newcommand{\eenum}{\end{enumerate}}
\newcommand{\beq}{\begin{equation}}
\newcommand{\eeq}{\end{equation}}
\newcommand{\norm}[1]{\|#1\|}
\newcommand{\spann}{\mbox{\rm span}}
\newcommand{\supp}{{\text{\rm supp}} \,}
\newcommand{\Id}{\mbox{\rm Id}}
\def\NN{\mathbb{N}}
\def\RR{\mathbb{R}}
\def\cF{{\mathcal{F}}}
\newcommand{\gk}[1]{#1}
\newcommand{\ah}[1]{#1}
\newcommand{\fk}[1]{#1}
\newcommand{\ahh}[1]{#1}
\begin{document}
\renewcommand{\thefootnote}{\fnsymbol{footnote}}
 \footnotetext[1]{
   Department of Mathematics, University of Missouri
   }
 \footnotetext[2]{
   Hausdorff Center for Mathematics, Universit{\"a}t Bonn
   }
\footnotetext[3]{
    Institute of Mathematics, University of Osnabr\"uck
    }
\renewcommand{\thefootnote}{\arabic{footnote}}
\title{Optimally Sparse Frames}

\author{Peter~G.~Casazza \footnotemark[1], Andreas Heinecke\footnotemark[1], Felix Krahmer\footnotemark[2], and Gitta~Kutyniok\footnotemark[3] \\[3ex]
{\it\normalsize Dedicated to the memory of Nigel J. Kalton, who was a great person, friend, and mathematician.}
}

\maketitle

\begin{abstract}
Frames have established themselves as a means to derive redundant, yet stable decompositions of a signal for
analysis or transmission, while also promoting sparse expansions.
However, when the signal dimension is large, the computation of the frame measurements of a signal typically requires a large
number of additions and multiplications, and this makes a frame decomposition intractable in applications with limited computing
budget. To address this problem, in this paper, we \gk{focus on frames in finite-dimensional Hilbert spaces and} introduce
sparsity \fk{for} \gk{such} frame\fk{s} as a new paradigm. In our terminology, a
{\em sparse frame} is a frame whose elements have a sparse representation in an orthonormal basis, thereby enabling
low-complexity frame decompositions. To introduce a precise meaning of {\em optimality}, we take the
sum of the numbers of vectors needed of this orthonormal basis when expanding each frame vector as sparsity measure. We then
analyze the recently introduced algorithm {\em Spectral Tetris} for construction of unit norm tight frames and prove
that the tight frames generated by this algorithm are in fact optimally sparse with respect to the standard unit vector
basis. Finally, we show that even the generalization of Spectral Tetris for the construction of unit norm frames associated
with a given frame operator produces optimally sparse frames.
\end{abstract}

\section{Introduction}
Frames are nowadays a standard methodology in applied mathematics, computer science, and engineering
when redundant, yet stable expansions are required. Examples include sampling theory \cite{E03}, data quantization
\cite{BP07,BLPY10}, quantum measurements \cite{EF02}, coding \cite{BDV00,SH03}, image processing \cite{CD02,KL10},
wireless communication \cite{HBP01,HP02,S01}, time-frequency analysis \cite{DHRS03,WES05}, speech recognition \cite{BCE06},
and bioimaging \cite{CK08}; see also \cite{KC07a,KC07b} for a beautiful survey and further
references. The typical application exploits the decomposition of a signal $x \in \RR^n$ into its frame components, which requires
computation of the frame measurements, i.e., the inner products between the signal $x$ and the frame vectors $(\varphi_i)_{i=1}^N$, say.
However, if the dimension $n$ of the ambient space is large and the frame vectors have `many' non-zero entries, the computational
complexity of the computation of the frame measurements might be high; in fact, for applications with constraints
on the available computing power and bandwidth for data processing, computing the frame measurements and hence the frame decomposition might be intractable.

In this paper, \ah{we} \gk{focus on frames in finite-dimensional Hilbert spaces and} tackle this problem by constructing
frames which have very few non-zero entries, thereby reducing
the number of required additions and multiplications when computing frame measurements significantly. This viewpoint
can be also slightly generalized by assuming that there exists a unitary transformation mapping the frame into
one having this `sparsity property'. Sparsity of fusion frames, which were introduced in \cite{CKL08} as a mathematical
framework for distributed processing thereby going beyond frame theory, was already defined in \cite{CCHKP10} as a concept.
However, the paradigm we aim for in this paper differs from the one introduced in \cite{CCHKP10} for fusion frames when restricting
to the case of frames, since we here aim for an overall sparsity of the frame.

Frame constructions have a long history; browsing through the literature, however, it becomes evident that all
constructions for unit norm tight frames -- those frames most advantageous for applications -- only produce
such frames for very special cases such as harmonic frames, see also \cite{Cas04,CL06}. Very recently, a \fk{significant}
\gk{advance in} the construction of unit norm tight frames was achieved through the introduction of the so-called
{\em Spectral Tetris} algorithm in \cite{CFMWZ09}. For most combinations of the number of frame vectors and the dimension
of the ambient space, this procedure indeed generates a unit norm tight frame. An extension of Spectral Tetris to construct
unit norm frames with prescribed frame operator \ahh{if its eigenvalues are greater or equal to two}
was introduced in \cite{CCHKP10} to allow additional flexibility in the design process.

In this paper we show that \ahh{the} unit norm frames which this extended Spectral Tetris algorithm
generates are {\em optimally sparse} in the sense of the total number of non-zero entries in the frame vectors,
\ahh{provided that Spectral Tetris is performed after ordering the prescribed eigenvalues in an appropriate way}. We
also explicitly determine the exact minimum value of the non-zero entries.
Along the way, we
introduce {\em block decompositions} as a novel structural property of unit norm frames, which we anticipate to be
useful also in other settings.


\subsection{Main Contribution}

Our main contribution is hence two-fold: Firstly, we introduce sparsity of a frame as a novel paradigm in frame theory.
More precisely, we introduce the notion of a sparse frame as well as a sparsity measure for such frames, thereby
allowing for optimality results. Secondly, we analyze an extended version of Spectral Tetris and
prove that this algorithm indeed constructs optimally sparse frames \ahh{if performed after ordering the prescribed eigenvalues
blockwise}. Thus, Spectral Tetris can serve as an
algorithm for computing frames with this desirable property, and our results show that it is not possible to derive sparser frames
through a different procedure.


\subsection{Impact on Applications}

Frames are nowadays considered a fundamental tool in electrical engineering, and we wish to refer to the survey paper \cite{CK08}
as also to the introductory papers \cite{KC07a,KC07b}.
However, the application of frames for the analysis of high-dimensional data such a webpages labeled by over
a million parameters or databases of images, each image being one data point, typically suffers from the fact that frame measurements
are computationally not feasible due to constraints such as computing power and bandwidth, or even limited space to store
the synthesis matrix. With the results presented in this paper, we introduce {\em sparsity of frames} as a novel paradigm for
frame constructions, resulting in computationally highly efficient frames. Our results do not only provide a lower bound for the
maximally achievable sparsity, but with the Spectral Tetris algorithm explicit constructions of efficient frames
for high-dimensional data analysis are now possible. Certainly, the desire to construct {\em tight} frames is evident
due to the favorable reconstruction properties of such frames. But our results go beyond this case, and also enable constructions of
optimally sparse {\em non-tight} frames with prescribed eigenvalues of the frame operator.
Let us provide two \ahh{additional} exemplary
applications illustrating why such frame properties are a natural constraint and which areas our results are
anticipated to impact.

{\em Analysis of Streaming Signals.}
The structure of the frame operator plays a key role in the noise
rejection ability of the frame. When the frame coefficients are corrupted by additive white Gaussian noise, the mean-squared
error (MSE) in reconstructing the signal is minimized by choosing the frame to be tight. In the presence of colored noise,
however, a tight frame is no longer optimal and the frame operator needs to be matched to the noise covariance matrix.
In such cases, the frame needs to be designed with respect to the eigen-basis of the inverse noise covariance matrix and
its eigenvalues. This is similar in spirit to the water-filling principle for precoder design in wireless communication,
where transmit power is distributed across the eigen directions of an inverse channel-noise covariance matrix to equalize
signal-to-noise-ratio across eigen directions, see \cite{SSBGS02,PCL03}. Additional structure on the frame is typically required depending on the
application. When the signal to be decomposed is a time or space series that cannot be observed or processed over long
blocks---due to limited memory, aperture size, or computational power---then it is needed to have a frame that not only
has a prescribed operator but also requires access to the signal samples only over a small temporal or spatial window.
This motivates construction of frames with sparse elements and desired spectra. The constructions we develop in this
paper yield to 2-sparse frames in any dimension, where the frame coefficients for a signal in an $N$-dimensional space
can be computed by observing the data stream through a window of only two samples. The frame sparsity can be tailored
to any arbitrary basis. In particular, when the noise covariance is known, the frame can be made sparse with respect to
the eigen-basis of the inverse noise covariance matrix.

{\em Face Recognition.} In face recognition, one main objective is to classify faces according to some given criterion, for instance, to
distinguish male from female faces. The application of PCA (or similar algorithms) delivers a basis of eigenfaces.
Learning algorithms on some training set of faces can then, for each basis element, determine the degree of the
significance of its coefficients for determining the gender. Customarily, measurements taken to classify faces are
assumed to be affected by noise; hence frame expansions are desirable. The frame should ideally be designed to
match the degree of significance ($=$ eigenvalues) of the given basis in the sense that it should be more redundant
for the computation of the significant coefficients and less for the insignificant coefficients. This is precisely
the setting we consider in this paper, and for which we analyze and construct optimally sparse
frames.

These are just brief samples of applications which will benefit from the results and constructions developed in this
paper. We anticipate that also various other applications
are impacted.


\subsection{Outline}

This paper is organized as follows. In Section \ref{sec:frameconstruction}, we first fix the terminology we require from frame
theory and then review the extended version of the Spectral Tetris algorithm. A novel sparsity measure for a frame will then
be introduced in Section \ref{sec:sparsity} together with a notion of optimality. In Section \ref{sec:optimality result}, a structural
property of frames suitable for our analysis is first introduced, and finally we state and prove our main result Theorem \ref{theo:main}.
We finish with some conclusions and discussions in Section \ref{sec:conclusions}.


\section{Frame Construction}
\label{sec:frameconstruction}

We first review the \gk{initial} as well as the extended version of the Spectral Tetris algorithm from \cite{CCHKP10}. To stand on common ground,
we start by fixing our terminology while briefly reviewing the basic definitions and notations related to frames.


\subsection{Frames}

A sequence
$\Phi = (\varphi_i)_{i=1}^N$ in $\RR^n$ is called a {\em frame} for $\RR^n$, if it is a -- typically, but not necessarily
linearly dependent -- spanning set. This definition is equivalent to asking for the existence of constants $0 < A \le B < \infty$
such that
\[
A\norm{x}^2 \leq \sum_{i=1}^N |\langle x, \varphi_i \rangle |^2 \leq B\norm{x}^2
\quad \mbox{for all } x \in \RR^n.
\]
When $A$ is chosen as the largest possible value and $B$ as the smallest for these inequalities to hold, then we call them the {\em (optimal)
frame bounds}. If $A$ and $B$ can be chosen as $A=B$, then the frame $\Phi$ is called {\em $A$-tight}, and if $A=B=1$ is possible, $\Phi$ is a
{\em Parseval frame}. $\Phi$ is called {\em equal-norm}, if there exists some $c>0$ such that $\|\varphi_i\|=c$ for all $i=1,\ldots,N$, and
it is {\em unit-norm} if $c=1$.

Frames allow the analysis of data by studying the associated {\em frame coefficients} $(\langle x, \varphi_i \rangle)_{i=1}^N$, where
the operator $T$ defined by $T: \RR^n \to \ell_2(\{1, 2, \dots, N\})$, $x \mapsto (\langle x,\varphi_i\rangle)_{i=1}^N$
is called the \emph{analysis operator}. The adjoint $T^*$ of the analysis operator is typically
referred to as the {\em synthesis operator} and satisfies $T^*((c_i)_{i=1}^N) = \sum_{i=1}^N c_i\varphi_i$. Later, the
synthesis operator will play an essential role, and we will write it in the matrix form $[\varphi_1| \ldots | \varphi_N]$ with
the frame vectors as columns. In the sequel we refer to this matrix as the {\em synthesis matrix}.
The main operator associated with a frame, which provides a stable reconstruction process, is the {\em frame operator}
\[
S =T^* T : \RR^n \to \RR^n, \quad x \mapsto \sum_{i=1}^N \langle x,\varphi_i\rangle \varphi_i,
\]
a positive, self-adjoint, invertible operator on $\RR^n$. In the case of an $A$-tight frame, we have $S= A \cdot \Id_{\RR^n}$, and in case of
a Parseval frame, $S=\Id_{\RR^n}$. In general, $S$ allows for the reconstruction of a signal $x \in \RR^n$ through the
reconstruction formula
\beq \label{eq:expansion}
x = \sum_{i=1}^N \langle x,S^{-1} \varphi_i\rangle \varphi_i.
\eeq

Redundancy is obviously the crucial property of a frame ensuring resilience to noise and erasures while simultaneously enabling us
to choose the expansion coefficients appropriately. The particular choice of coefficients displayed in \eqref{eq:expansion} is the
smallest in $\ell_2$ norm \cite{Chr03}, hence it contains the least energy. Recently, a different view point has received rapidly
increasing attention, namely to choose the coefficient sequence to be sparse in the sense of having only few non-zero entries,
thereby allowing data compression while preserving perfect recoverability (see, e.g., \cite{BDE09}, and the references therein).
In this context, for later use, we will denote the support of a vector $x \in \RR^n$, i.e., the \gk{set of indices of the} non-zero entries, by $\supp x$.

Finally, we should mention that, customarily, redundancy of a frame $(\varphi_i)_{i=1}^N $ for $\RR^n$ was measured by $\frac{N}{n}$,
i.e., the number of frame vectors divided by the dimension of the ambient space. Since this measure is exceptionally crude and not
sensitive to local behavior of the frame vectors, the notions of {\em upper} and {\em lower redundancy} have been suggested in \cite{BCK10}
as a finer redundancy measure.


\subsection{The Spectral Tetris Algorithm}

Spectral Tetris was first introduced in \cite{CFMWZ09} as an algorithm to generate unit norm tight frames
for any number of frame vectors $N$, say, and for any ambient dimension $n$ provided that $\frac{N}{n} \ge 2$. This
algorithm \gk{is indeed significant for} frame constructions, since it is the first systematic construction
of unit norm tight frames. Before, only a number of very special classes of unit norm tight frames such as harmonic frames
have been known.

An extension to the construction of unit norm frames having a desired frame operator associated with eigenvalues
$\lambda_1, \ldots, \lambda_n \ge 2$ satisfying $\sum_{j=1}^n \lambda_j = N$ was then introduced and analyzed in
\cite{CCHKP10} -- in fact, an even more general algorithm for the construction of fusion frames was stated therein. The
frame-version of this algorithm is what we intend to analyze in this paper. Figure \ref{fig:ST} states the steps of
this version of the algorithm, which we coin {\em Spectral Tetris for Frames}; in short, STF. We wish to remark that
the original form of the algorithm in \cite{CFMWZ09} requires the sequence of eigenvalues to be in decreasing order,
i.e. $\lambda_1 \ge \ldots \ge \lambda_n$. This assumption,
however, was made only for classification reasons, and it is easily seen that it can be dropped. Since in the sequel,
we will consider carefully chosen, presumably non-decreasing, sequences of eigenvalues, the gained freedom is essential
for our analysis.

\begin{figure}[h]
\centering
\framebox{
\begin{minipage}[h]{5.0in}
\vspace*{0.3cm}
{\sc \underline{STF: Spectral Tetris for Frames}}

\vspace*{0.4cm}

{\bf Parameters:}
\begin{itemize}
\item Dimension $n \in \NN$.
\item Number of frame elements $N \in \NN$.
\item Sequence of eigenvalues $\lambda_1, \ldots, \lambda_n \ge 2$ satisfying $\sum_{j=1}^n \lambda_j = N$.
\end{itemize}

\vspace*{0.2cm}

{\bf Algorithm:}
\begin{itemize}
\item[1)] Set $i := 1$.
\item[2)] For $j=1,\ldots,n$ do
\item[3)] \hspace*{1cm} Repeat
\item[4)] \hspace*{2cm} If $\lambda_j < 1$ then
\item[5)] \hspace*{3cm} $\varphi_i := \sqrt{\frac{\lambda_j}{2}} \cdot e_j + \sqrt{1-\frac{\lambda_j}{2}} \cdot e_{j+1}$.
\item[6)] \hspace*{3cm} $\varphi_{i+1} := \sqrt{\frac{\lambda_j}{2}}\cdot e_j - \sqrt{1-\frac{\lambda_j}{2}} \cdot e_{j+1}$.
\item[7)] \hspace*{3cm} $i := i+2$.
\item[8)] \hspace*{3cm} $\lambda_{j+1} := \lambda_{j+1} - (2-\lambda_j)$.
\item[9)] \hspace*{3cm} $\lambda_j := 0$.
\item[10)] \hspace*{2cm} else
\item[11)] \hspace*{3cm} $\varphi_i := e_j$.
\item[12)] \hspace*{3cm} $i := i+1$.
\item[13)] \hspace*{3cm} $\lambda_j := \lambda_j - 1$.
\item[14)] \hspace*{2cm} end.
\item[15)] \hspace*{1cm} until $\lambda_j = 0$.
\item[16)] end.
\end{itemize}

\vspace*{0.2cm}

{\bf Output:}
\begin{itemize}
\item Frame STF$(N; \lambda_1, \ldots, \lambda_n) :=\{\varphi_i\}_{i=1}^N$.
\end{itemize}
\vspace*{0.01cm}
\end{minipage}
}
\caption{The Spectral Tetris algorithm for constructing an $N$-element unit norm frame STF$(N; \lambda_1, \ldots, \lambda_n)$ for $\RR^n$
with an associated frame operator having eigenvalues $\lambda_1, \ldots, \lambda_n$.}
\label{fig:ST}
\end{figure}

Before we continue, let us give an example to provide a more intuitive feeling of how STF works and to introduce a cursor notation which
will be utilized in later proofs.

\begin{example} \label{exa:cursor}
We aim to construct a $\ahh{10}$ element unit norm frame in $\mathbb{R}^4$ having the eigenvalues $\lambda_1=\lambda_2=\lambda_3=\frac{8}{3}$ and
$\lambda_4=2$. STF will provide such a frame by generating a $\ahh{4 \times 10}$ synthesis matrix with the following properties: The columns of
the matrix have norm $1$ (guaranteeing that the frame has unit norm vectors) and the rows of the matrix are orthogonal and square sum to
the desired eigenvalues  (guaranteeing that the frame operator is diagonal with the desired eigenvalues on its diagonal). In the example
we consider each of the first $3$ rows of the to-be-generated synthesis matrix have to square sum to $\frac{8}{3}$ and the last row to
square sum to $2$. The algorithm now starts with a $\ahh{4 \times 10}$ matrix of unknown entries and lets a {\em cursor} move forward along columns
and rows assigning values to certain entries. The remaining entries are set to zero in the end. When the cursor is in position $(i,j)$, we
update the variable $\lambda_i$ to be the difference between the eigenvalue assigned to row $i$ and the square sum of the entries already
assigned to row $i$, in order to keep track of how much weight still has to be assigned to row $i$ to make it square sum to the desired
eigenvalue. In general, one of the three cases occurs in each step:

{\em Case $1$}: If $\lambda_i>1$, then the current entry $(i,j)$ is set to one, we update $\lambda_i := \lambda_i-1$, and the cursor $(i,j)$
is moved to the right, i.e., $(i,j) := (i+1,j)$. This is, for example, the case when the cursor is in position $(1,1)$.
\ahh{At this point we have $\lambda_1=\frac{8}{3}$ and we update to $\lambda_1=\frac{8}{3}-1=\frac{5}{3}$.
The matrix changes as follows, where we denote the unknown matrix entries by $\cdot$ and the position of the cursor by $\odot$:}
\[
\begin{bmatrix}
\ahh{\odot}&\cdot&\cdot&\cdot&\cdot&\cdot&\cdot&\cdot&\cdot&\cdot\\
\cdot&\cdot&\cdot&\cdot&\cdot&\cdot&\cdot&\cdot&\cdot&\cdot\\
\cdot&\cdot&\cdot&\cdot&\cdot&\cdot&\cdot&\cdot&\cdot&\cdot\\
\cdot&\cdot&\cdot&\cdot&\cdot&\cdot&\cdot&\cdot&\cdot&\cdot
\end{bmatrix}
\quad \longrightarrow \quad
\begin{bmatrix}
1&\ahh{\odot}&\cdot&\cdot&\cdot&\cdot&\cdot&\cdot&\cdot&\cdot\\
\cdot&\cdot&\cdot&\cdot&\cdot&\cdot&\cdot&\cdot&\cdot&\cdot\\
\cdot&\cdot&\cdot&\cdot&\cdot&\cdot&\cdot&\cdot&\cdot&\cdot\\
\cdot&\cdot&\cdot&\cdot&\cdot&\cdot&\cdot&\cdot&\cdot&\cdot
\end{bmatrix}
\]

{\em Case $2$}: If $0< \lambda_i<1$, then the entries $(i,j)$, $(i+1,j)$, $(i,j+1)$, and $(i+1,j+1)$ are set according to lines $5)$ and $6)$ of
STF, we update $\lambda_{i+1} := \lambda_{i+1} + \lambda_i - 2$, and the cursor is moved to $(i,j) := (i+2,j+1)$.
This is, for example, the case when the cursor is in position $(1,3)$.
\ahh{At this point we have $\lambda_1 = \frac{2}{3}$ and update $\lambda_2 = \frac{8}{3}$ to $\lambda_2 = \frac{8}{3}- (2-\frac{2}{3})=\frac{4}{3}$. The matrix changes as follows:}

\[
\begin{bmatrix}
1&1&\ahh{\odot}&\cdot&\cdot&\cdot&\cdot&\cdot&\cdot&\cdot\\
\cdot&\cdot&\cdot&\cdot&\cdot&\cdot&\cdot&\cdot&\cdot&\cdot\\
\cdot&\cdot&\cdot&\cdot&\cdot&\cdot&\cdot&\cdot&\cdot&\cdot\\
\cdot&\cdot&\cdot&\cdot&\cdot&\cdot&\cdot&\cdot&\cdot&\cdot
\end{bmatrix}
\quad \longrightarrow \quad
\begin{bmatrix}
1&1&\sqrt{\frac{1}{3}}&\sqrt{\frac{1}{3}}&\cdot&\cdot&\cdot&\cdot&\cdot&\cdot\\
\cdot&\cdot&\sqrt{\frac{2}{3}}&-\sqrt{\frac{2}{3}}&\ahh{\odot}&\cdot&\cdot&\cdot&\cdot&\cdot\\
\cdot&\cdot&\cdot&\cdot&\cdot&\cdot&\cdot&\cdot&\cdot&\cdot\\
\cdot&\cdot&\cdot&\cdot&\cdot&\cdot&\cdot&\cdot&\cdot&\cdot
\end{bmatrix}
\]
This case is the crucial step of the algorithm. Note that the $2\times2$ block we inserted has the properties that its rows are orthogonal and
its columns square sum to $1$, which are properties desired for the synthesis operator.

{\em Case $3$}: If $\lambda_i = 1$, then the entry $(i,j)$ is set to one, and the cursor is moved to the right below $(i,j) := (i+1,j+1)$.
This is, for example, the case when the cursor is $(3,8)$. \ahh{At this point we have $\lambda_3=1$ and the matrix changes as follows:}
\[
\begin{bmatrix}
1&1&\sqrt{\frac{1}{3}}&\sqrt{\frac{1}{3}}&\cdot&\cdot&\cdot&\cdot&\cdot&\cdot\\
\cdot&\cdot&\sqrt{\frac{2}{3}}&-\sqrt{\frac{2}{3}}&1&\sqrt{\frac{1}{6}}&\sqrt{\frac{1}{6}}&\cdot&\cdot&\cdot\\
\cdot&\cdot&\cdot&\cdot&\cdot&\sqrt{\frac{5}{6}}&-\sqrt{\frac{5}{6}}&\ahh{\odot}&\cdot&\cdot\\
\cdot&\cdot&\cdot&\cdot&\cdot&\cdot&\cdot&\cdot&\cdot&\cdot
\end{bmatrix}
\quad \longrightarrow \quad
\begin{bmatrix}
1&1&\sqrt{\frac{1}{3}}&\sqrt{\frac{1}{3}}&\cdot&\cdot&\cdot&\cdot&\cdot&\cdot\\
\cdot&\cdot&\sqrt{\frac{2}{3}}&-\sqrt{\frac{2}{3}}&1&\sqrt{\frac{1}{6}}&\sqrt{\frac{1}{6}}&\cdot&\cdot&\cdot\\
\cdot&\cdot&\cdot&\cdot&\cdot&\sqrt{\frac{5}{6}}&-\sqrt{\frac{5}{6}}&1&\cdot&\cdot\\
\cdot&\cdot&\cdot&\cdot&\cdot&\cdot&\cdot&\cdot&\ahh{\odot}&\cdot
\end{bmatrix}.
\]

After performing all steps of the algorithm, the final synthesis matrix constructed by STF has the form
\[
\begin{bmatrix}
1&1&\sqrt{\frac{1}{3}}&\sqrt{\frac{1}{3}}&0&0&0&0&0&0\\
0&0&\sqrt{\frac{2}{3}}&-\sqrt{\frac{2}{3}}&1&\sqrt{\frac{1}{6}}&\sqrt{\frac{1}{6}}&0&0&0\\
0&0&0&0&0&\sqrt{\frac{5}{6}}&-\sqrt{\frac{5}{6}}&1&0&0\\
0&0&0&0&0&0&0&0&1&1
\end{bmatrix} .
\]
\end{example}


\section{New Paradigm for Frame Constructions: Sparsity}
\label{sec:sparsity}


\subsection{Classical Sparsity}

Over the past few years, sparsity has become a key concept in various areas of applied mathematics, computer science, and
electrical engineering. Sparse signal processing methodologies explore the fundamental fact that many types of signals can
be represented by only a few non-zero coefficients when choosing a suitable basis or, more generally, a frame. A signal
representable by only $k$, say, basis or frame elements is called {\em $k$-sparse}. If signals possess such a
sparse representation, they can in general be recovered from few measurements using $\ell_1$ minimization techniques (see, e.g.,
\cite{BDE09,CRT06,Don06} and the references therein).


\subsection{Sparse Frames}

In this paper, however, we pose a different question concerning sparsity, viewing sparsity from a very different standpoint.
Typically, data processing applications face low on-board computing power and/or a small bandwidth budget. When the signal
dimension is large, the decomposition of the signal into its frame measurements requires a large number of additions and
multiplications, which may be infeasible for on-board data processing. Also the space required for storing the synthesis matrix of
the frame might be huge.
It would hence be a significant improvement, if each frame vector would
contain very few non-zero entries, hence -- phrasing it differently -- be sparse in the standard unit vector basis, which ensures low-complexity
processing. Since we are interested in the performance of the whole frame, the total number of non-zero entries in the frame
vectors seems to be a suitable sparsity measure. This viewpoint can also be slightly generalized by assuming that there exists
a unitary transformation mapping the frame into one having this `sparsity' property.


\subsection{Sparseness Measure}

Taking these considerations into account, we are led to proclaim the following definition for a sparse frame:

\begin{definition} \label{def:k_sparse}
Let $(e_j)_{j=1}^n$ be an orthonormal basis for $\RR^n$. Then a frame $(\varphi_i)_{i=1}^N$ for $\RR^n$ is called
{\em $k$-sparse} with respect to $(e_j)_{j=1}^n$, if, for each $i \in \{1,\ldots,N\}$, there exists $J_i \subseteq \{1,\ldots,n\}$
such that
\[
\varphi_i \in \spann\{e_j : j \in J_i\}
\]
and
\beq \label{eq:k_sparse_1}
\sum_{i=1}^n |J_i| = k.
\eeq
\end{definition}

The attentive reader will have realized that this definition differs from the definition stated in \cite{CCHKP10} for fusion
frames (see \cite{CKL08}) when restricting to the special case of frames. The exact relation is the following:
Let $(e_j)_{j=1}^n$ be an orthonormal basis for $\RR^n$, let $(\varphi_i)_{i=1}^N$ be a frame for $\RR^n$, and, for each
$i \in \{1,\ldots,N\}$, let $J_i \subseteq \{1,\ldots,n\}$ such that $\varphi_i \in \spann\{e_j : j \in J_i\}$. Then,
in the sense of \cite{CCHKP10}, the frame is $\max\{|J_i| : i = 1, \ldots, n\}$-sparse, whereas in our Definition
\ref{def:k_sparse}, the frame is $\sum_{i=1}^n |J_i|$ sparse. Thus our definition encodes the true overall sparsity which is
the sparsity required for frame processing in contrast to the more local version of \cite{CCHKP10}.

One can certainly imagine other sparsity measures dependent on the requirements and constraints of the application at hand.
Instead of \eqref{eq:k_sparse_1}, a weighted version could be considered with the weights chosen depending on the computational
constraints of the application. Also, \eqref{eq:k_sparse_1} could be regarded as the $\ell_1$ norm of the sequence $\{|J_i| :
i = 1, \ldots, n\}$, and a different viewpoint might lead us to considering a different norm instead -- as it was done in
\cite{CCHKP10} for the $\ell_\infty$ norm.


\subsection{Notion of Optimality}

We now have the necessary machinery at hand to introduce a notion of an {\em optimally} sparse frame. Optimality will typically
-- as also in this paper - be considered within a particular class of frames, for instance, in the class of unit norm tight frames.

\begin{definition}
Let $\cF$ be a class of frames for $\RR^n$, let $(\varphi_i)_{i=1}^N \in \cF$, and let $(e_j)_{j=1}^n$ be an orthonormal basis for $\RR^n$.
Then $(\varphi_i)_{i=1}^N$ is called {\em  optimally sparse in $\cF$ with respect to $(e_j)_{j=1}^n$}, if $(\varphi_i)_{i=1}^N$ is
$k_1$-sparse with respect to $(e_j)_{j=1}^n$ and there does not exist a frame $(\psi_i)_{i=1}^N \in \cF$ which is $k_2$-sparse
with respect to $(e_j)_{j=1}^n$ with $k_2 < k_1$.
\end{definition}

We wish to emphasize the strong dependence of sparsity on the chosen basis. Also an optimally sparse frame is in general not uniquely determined;
we present an example for this observation in Subsection \ref{subsec:maxsparsity}.


\section{An Optimality Result for Sparse Frames}
\label{sec:optimality result}

We now seek a construction for an optimally sparse unit norm frame with prescribed properties. As already elaborated upon before, the
condition we impose is having a given frame operator, which, in particular, also includes operators with equal eigenvalues corresponding
to tight frames. This frame operator will in the following be always determined by its eigenvalues. Hence we are interested in optimal
sparsity within the following class:

Let $n, N > 0$ and let the real values $\lambda_1,\ldots,\lambda_n\geq 2$ satisfy $\sum_{j=1}^n \lambda_j = N$. Then
the class of unit norm frames $(\varphi_i)_{i=1}^N$ in $\RR^n$ whose frame operator has eigenvalues $\lambda_1, \ldots, \lambda_n$ will
be denoted by
\[
\cF(N,\{\lambda_i\}_{i=1}^{n}).
\]
It is important to mention that by writing $\{\lambda_i\}_{i=1}^{n}$, we wish to indicate that the ordering does not play a role
here, however, multiplicities are counted. The just defined class $\cF(N,\{\lambda_i\}_{i=1}^{n})$
\ahh{is non-empty by application of the STF. In fact, it can be shown using methods introduced in \cite{CK03} and \cite{Cas04},
that it is an infinite set for any $n, N > 0$ and real values $\lambda_1,\ldots,\lambda_n\geq 2$.}

It might be beneficial for the reader to mention at this point that we will discuss the analysis presented in Subsections
\ref{subsec:structure} to \ref{subsec:main} in the important special case of tight frames in Subsection \ref{subsec:tight} for illustrative
purposes.


\subsection{Novel Structural Property of Synthesis Matrices}
\label{subsec:structure}

Aiming for determining the \fk{maximally} achievable sparsity for a class $\cF(N,\{\lambda_i\}_{i=1}^{n})$, we first need
to introduce a particular measure associated with the set of eigenvalues $\{\lambda_i\}_{i=1}^{n}$. This measure indicates the
maximal number of partial sums which are an integer; here one maximizes over all reorderings of the eigenvalues. Before
stating the precise definition, let us provide some intuition why the maximally achievable sparsity is dependent on partial
integer valued sums. Whenever the partial sum of the first $l$, say, eigenvalues
$\lambda_1,\ldots,\lambda_l$ is an integer, the cursor -- recall that the concept of a cursor was introduced in Example \ref{exa:cursor}
-- in row $l$ will be in Case $3$ of the cases discussed in Example \ref{exa:cursor}. Opposed to the setting of 4 new non-zeros entries
in Case 2, in this case only {\em one} new non-zero entry will be defined. Roughly speaking, this will allow us to reduce the analysis
to the blocks between two such integer partial sums.


The precise definition of the measure on a set of eigenvalues  we require is now as follows.

\begin{definition}
A finite sequence of real values $\lambda_1,\ldots,\lambda_n$ is {\em ordered blockwise}, if for any permutation $\pi$ of $\{1,\ldots,n\}$
the set of partial sums $\{\sum_{j=1}^s\lambda_j\colon s=1,\ldots,n\}$ contains at least as many integers as the set
$\{\sum_{j=1}^s\lambda_{\pi(j)}\colon s=1,\ldots,n\}$.
The {\em maximal block number} of a finite sequence of real values $\lambda_1, \ldots, \lambda_n$, denoted by
$\mu(\lambda_1, \ldots, \lambda_n)$, is the number of integers in $\{\sum_{j=1}^s\lambda_{\sigma(j)}\colon s=1,\ldots,n\}$, where
$\sigma$ is a permutation of $\{1,\ldots,n\}$ such that $\lambda_{\sigma(1)},\ldots,\lambda_{\sigma(n)}$ is ordered blockwise.
\end{definition}

Surprisingly, the notion of maximal block number can illuminatingly be transferred to a particular decomposition
property of the synthesis matrix of a frame. Let us first define the decomposition property we are interested in:

\begin{definition}
Let $n, N > 0$, and let $(\varphi_i)_{i=1}^N$ be a frame for $\RR^n$. Then we say that the synthesis matrix of  $(\varphi_i)_{i=1}^N$
has {\em block decomposition of order $m$}, if there exists a partition $\{1, \ldots, N\} = I_1 \cup \ldots \cup I_m$ such that,
for any $k_1 \in I_{i_1}$ and $k_2 \in I_{i_2}$ with $i_1 \neq i_2$, we have $\supp \varphi_{k_1} \cap \supp \varphi_{k_2} = \emptyset$
and $m$ is maximal.
\end{definition}

The following result now connects the maximal block number of the sequence of eigenvalues of a frame operator
with the block decomposition order of an associated frame.

\begin{proposition}\label{prop:block}
Let $n, N > 0$ and let the real values $\lambda_1,\ldots,\lambda_n \ge 2$ satisfy $\sum_{j=1}^n \lambda_j = N$.
Then the synthesis matrix of any frame in the class $\cF(N,\{\lambda_i\}_{i=1}^{n})$ has block decomposition
of order at most $\mu(\lambda_1, \ldots, \lambda_n)$.
\end{proposition}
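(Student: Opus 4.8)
The plan is to show that a block decomposition of order $m$ for the synthesis matrix of a frame $(\varphi_i)_{i=1}^N \in \cF(N,\{\lambda_i\}_{i=1}^n)$ forces the existence of a reordering of the eigenvalues having at least $m$ integer partial sums, and then invoke the definition of $\mu$. So suppose $\{1,\ldots,N\} = I_1 \cup \ldots \cup I_m$ is a block decomposition: the supports of frame vectors from different blocks are disjoint. First I would argue that this induces a corresponding partition of the coordinate set $\{1,\ldots,n\}$. For each block $I_\ell$, let $R_\ell := \bigcup_{i \in I_\ell} \supp \varphi_i \subseteq \{1,\ldots,n\}$ be the set of rows touched by that block. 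The disjointness of supports across blocks gives $R_{\ell_1} \cap R_{\ell_2} = \emptyset$ for $\ell_1 \ne \ell_2$; since the frame spans $\RR^n$, every row is hit by some $\varphi_i$, so $\{1,\ldots,n\} = R_1 \cup \ldots \cup R_m$ is a genuine partition into $m$ nonempty blocks. (Nonemptiness of each $R_\ell$ follows because each $I_\ell$ is nonempty in a maximal decomposition and frame vectors are unit norm, hence nonzero.)

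Next I would compute the eigenvalue mass carried by each row-block. The frame operator $S = T^*T$ is diagonal in the basis $(e_j)$ only a posteriori; what we know directly is $S e_j = \sum_i \langle e_j, \varphi_i\rangle \varphi_i$, but the key clean statement is that the $j$-th diagonal entry of $S$ equals $\sum_{i=1}^N |\langle \varphi_i, e_j\rangle|^2$, i.e. the squared $\ell_2$-norm of row $j$ of the synthesis matrix. For a frame in $\cF(N,\{\lambda_i\})$ the frame operator has eigenvalues $\lambda_1,\ldots,\lambda_n$; I would need the fact (which for STF-type frames holds, and more generally must be assumed or derived from the block structure) that the synthesis matrix can be taken with orthogonal rows, so that $S$ is diagonal with entries a permutation of the $\lambda_j$ — call the row-norms-squared $\nu_1,\ldots,\nu_n$, a rearrangement of the $\lambda_j$. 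Then for each block, $\sum_{j \in R_\ell} \nu_j = \sum_{j \in R_\ell}\sum_i |\langle\varphi_i,e_j\rangle|^2 = \sum_{i \in I_\ell} \sum_{j} |\langle \varphi_i, e_j\rangle|^2 = \sum_{i\in I_\ell}\|\varphi_i\|^2 = |I_\ell|$, an integer — here the middle equality uses that each $\varphi_i$ with $i \in I_\ell$ is supported inside $R_\ell$ and that rows outside $R_\ell$ contribute nothing to those $\varphi_i$, while the unit-norm property gives the last step. Thus each row-block carries integer eigenvalue mass $|I_\ell|$, and $\sum_\ell |I_\ell| = N$.

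Finally I would convert "$m$ row-blocks each with integer eigenvalue sum" into "$m$ integer partial sums under a suitable reordering." List the blocks in any order $R_{\sigma(1)}, \ldots, R_{\sigma(m)}$ and, within each block, list its eigenvalues in any order; concatenating gives a permutation of $\lambda_1,\ldots,\lambda_n$. The partial sum taken up to and including the last element of block $R_{\sigma(t)}$ equals $\sum_{\ell \le t}|I_{\sigma(\ell)}|$, which is an integer, for each $t = 1,\ldots,m$; these are $m$ distinct partial sums. By the definition of the maximal block number, $\mu(\lambda_1,\ldots,\lambda_n) \ge m$, which is exactly the claimed bound. The main obstacle I anticipate is the step asserting that a frame in $\cF(N,\{\lambda_i\})$ with a block decomposition can be presented with pairwise orthogonal rows whose squared norms are the $\lambda_j$: in general the frame operator's eigenbasis need not be the standard basis $(e_j)$. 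However, the block decomposition itself block-diagonalizes $S$ with respect to the partition $R_1 \cup \ldots \cup R_m$ (since $S e_j = \sum_i \langle e_j,\varphi_i\rangle\varphi_i$ stays inside $\spann\{e_k : k \in R_\ell\}$ when $j \in R_\ell$), so $S = \bigoplus_\ell S_\ell$ and $\tr(S_\ell) = \sum_{j\in R_\ell}\nu_j = |I_\ell|$; hence the integrality argument goes through using $\tr(S_\ell)$ directly, without needing orthogonal rows or the exact eigenvalue-to-row assignment — only that the eigenvalue multiset of $S$ is $\{\lambda_i\}$ and splits as $\bigsqcup_\ell \mathrm{spec}(S_\ell)$, giving a reordering of the $\lambda_i$ whose $m$ block-wise partial sums $\tr(S_{\sigma(1)}) + \cdots + \tr(S_{\sigma(t)})$ are integers. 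I would write the argument in this cleaner trace form to sidestep the orthogonality issue entirely.
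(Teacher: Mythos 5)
Your argument is correct and is essentially the paper's own proof: both pass from the block decomposition to a partition of the coordinates, block-diagonalize the frame operator accordingly so its spectrum splits over the blocks, observe that each block's eigenvalue sum equals $|I_\ell|$ (you via $\tr(S_\ell)$, the paper via the Hilbert--Schmidt norm of $T_\ell^*$, which is the same quantity), and conclude $\mu \ge m$ by reordering the eigenvalues blockwise. Your closing remark that the trace form sidesteps any need for orthogonal rows is exactly the right resolution of the worry you raised earlier.
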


\begin{proof}
Suppose $(\varphi_i)_{i=1}^N\in\cF(N,\{\lambda_i\}_{i=1}^{n})$ has block decomposition of order
$\nu$ and let
$\{1, \ldots, N\} = I_1 \cup \ldots \cup I_\nu$  be a corresponding partition.
For $j=1,\ldots,\nu$, let $S_j$ be the common support set of the vectors $(\varphi_i)_{i\in I_j}$, i.e., $k\in S_j$ if and only if
$k\in\supp\varphi_i$ for some $i\in I_j$. Now let $r_k$ denote the $k$-th row of the synthesis matrix of $(\varphi_i)_{i=1}^N$.
Then $S_1,\ldots,S_{\nu}$ is a partition of $\{1,\ldots,n\}$
and, for every $j=1,\ldots,\nu$ \ah{we have by the fact that $(\varphi_i)_{i=1}^N$ consists of unit norm vectors and by our choice of
$I_j$ and $S_j$ that}
\begin{equation}{\label{reviewer3}}
|I_j|= \sum_{k\in I_j} \|\varphi_k\|^2 =\sum_{k\in S_j} \|r_k\|^2 = \sum_{k\in S_j} \lambda_k.
\end{equation}
\ah{The last equality holds since we, after permutation of the columns, can write the synthesis matrix of $(\varphi_i)_{i=1}^N$ as
$T^* = [T^*_1,\ldots,T^*_{\nu}]$, where $T^*_j$ has zero entries except on the rows indexed by $I_j$ and the columns indexed by
$S_j$, for $j=1,\ldots,\nu$. The frame operator $T^*T = \sum_{j=1}^{\nu} T^*_j T_j$ is block diagonal with blocks $T^*_j T_j$, hence
its eigenvalues are exactly the union of those of each matrix $T^*_j T_j$. But $\sum_{k\in I_j} \|\varphi_k\|^2 =\sum_{k\in S_j} \|r_k\|^2 $
equals the} \fk{square of the} \ah{Hilbert-Schmidt norm of $T^*_j$ and therefore the sum of the eigenvalues of $T^*_jT_j$. This shows the
last equality of (\ref{reviewer3}). Since (\ref{reviewer3}) holds for all $j=1,\ldots,\nu$, we conclude that} the maximal block number of
$\lambda_1, \ldots, \lambda_n$ is at least $\nu$.
Thus the synthesis matrix of the arbitrarily chosen frame $(\varphi_i)_{i=1}^N$ in the class $\cF(N,\{\lambda_i\}_{i=1}^{n})$ has
block decomposition of order at most $\mu(\lambda_1, \ldots, \lambda_n)$.
\end{proof}


\subsection{\fk{Maximally} Achievable Sparsity}
\label{subsec:maxsparsity}

Having introduced the required new notions, we are now in a position to state the exact value for the maximally achievable
sparsity for a class $\cF(N,\{\lambda_i\}_{i=1}^{n})$. It is not initially clear that this optimal sparsity can always be attained.
With Theorem \ref{theo:main} we will prove that this is indeed the case; in fact, Theorem \ref{theo:main} also provides an explicit
construction of those frames.

\begin{theorem} \label{theo:maxsparsity}
Let $n, N > 0$, and let the real values $\lambda_1,\ldots,\lambda_n \ge 2$ satisfy $\sum_{j=1}^n \lambda_j = N$. Then any frame
in $\cF(N,\{\lambda_i\}_{i=1}^{n})$ has sparsity at least
\[
N+2(n-\mu(\lambda_1, \ldots, \lambda_n))
\]
with respect to any orthonormal basis.
\end{theorem}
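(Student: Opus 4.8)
The plan is to reduce the global sparsity count to a sum of local counts over the blocks of a block decomposition, and then to bound the sparsity contributed by each block from below by showing each block must be ``connected enough'' to force many nonzeros. First I would fix an arbitrary frame $(\varphi_i)_{i=1}^N \in \cF(N,\{\lambda_i\}_{i=1}^{n})$ with synthesis matrix $T^*$, and let $\{1,\ldots,N\} = I_1 \cup \ldots \cup I_m$ be its block decomposition, where $m$ is the block decomposition order; by Proposition \ref{prop:block} we have $m \le \mu(\lambda_1,\ldots,\lambda_n)$. As in the proof of Proposition \ref{prop:block}, let $S_1,\ldots,S_m$ be the corresponding partition of the row index set $\{1,\ldots,n\}$, so that after permuting rows and columns $T^*$ is block-diagonal with blocks $T_j^*$ of size $|S_j| \times |I_j|$, and $\sum_{k \in S_j}\lambda_k = |I_j|$. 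The total sparsity is $\sum_{i=1}^N |\supp \varphi_i| = \sum_{j=1}^m \bigl(\sum_{i \in I_j} |\supp \varphi_i|\bigr)$, so it suffices to prove the per-block bound $\sum_{i \in I_j} |\supp \varphi_i| \ge |I_j| + 2(|S_j| - 1)$, and then sum over $j$, using $\sum_j |I_j| = N$ and $\sum_j |S_j| = n$ together with $m \le \mu$.

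The heart of the argument is thus the single-block estimate: if $B$ is the synthesis matrix of a unit-norm frame for $\RR^d$ (here $d = |S_j|$) consisting of $M$ vectors (here $M = |I_j|$) whose frame operator is invertible (eigenvalues $\ge 2$ in our case) and which admits \emph{no} nontrivial block decomposition, then $B$ has at least $M + 2(d-1)$ nonzero entries. To see this I would consider the ``support graph'' $G$ on vertex set $\{1,\ldots,d\}$ (the rows), joining two rows $p,q$ by an edge whenever some column of $B$ has nonzeros in both rows. The hypothesis that the block cannot be further decomposed means precisely that $G$ is connected; otherwise a disconnection of $G$ would split the columns into a finer block decomposition, contradicting maximality of $m$. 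A connected graph on $d$ vertices has at least $d-1$ edges, and each edge is witnessed by at least one column carrying $\ge 2$ nonzeros in the relevant rows. The remaining counting step is to convert ``$\ge d-1$ edges, each from a column with $\ge 2$ entries'' plus ``every column has $\ge 1$ entry (unit norm, so no zero column)'' into the bound $M + 2(d-1)$: every column contributes at least $1$ to the nonzero count, giving $M$; and we need an additional $2(d-1)$ from columns that have $\ge 2$ entries. Picking a spanning tree of $G$ with $d-1$ edges and, for each tree edge, a witnessing column, one checks these witnessing columns can be chosen so that the extra nonzeros they carry (beyond the one already counted) total at least $2(d-1)$ — each such column has $\ge 2$ entries so contributes $\ge 1$ \emph{extra}, and a short combinatorial argument (each column used for at most one tree edge, or a connectivity/rank argument showing one cannot reuse columns too aggressively) upgrades the $d-1$ extra nonzeros to $2(d-1)$. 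This is where I expect to use more carefully that the rows must be orthogonal and square-sum to the eigenvalues $\lambda_k \ge 2$, not merely that columns are nonzero: a single shared nonzero per pair of rows is not enough to make the rows orthogonal, which is what forces the ``$\times 2$''.

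The main obstacle, then, is precisely this last combinatorial/linear-algebraic step — turning connectivity of the support graph into the sharp constant $2$ in front of $(d-1)$. The naive graph bound only yields $M + (d-1)$; squeezing out the factor $2$ requires exploiting that whenever a column is the unique link between two row-groups it must carry at least two nonzeros \emph{and} that orthogonality of the two rows it touches cannot be achieved by that column alone in isolation, forcing further structure. I would handle this by an induction on the number of rows $d$ of the block: remove a ``leaf'' row of a spanning tree of $G$ (a row $p$ connected to the rest of the block only through a single column or a small set of columns), account for the nonzeros in that row and the column(s) linking it, argue these number at least $2$ plus whatever is counted for the removed column in the smaller block, and apply the inductive hypothesis to the frame obtained by deleting row $p$ and the columns supported only in row $p$. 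The bookkeeping to ensure no double-counting of nonzeros between the ``removed'' part and the ``remaining'' part, and to verify the reduced block is still a valid single block (connected, unit norm after possible renormalization, eigenvalues still $\ge$ the needed threshold), is the delicate part; everything else is assembly via Proposition \ref{prop:block} and the additivity of the sparsity count over the block decomposition.
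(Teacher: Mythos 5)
Your reduction to a per-block estimate via Proposition \ref{prop:block} is sound, and you have correctly put your finger on the one fact that makes the constant $2$ work: two orthogonal rows of $T^*$ cannot have supports overlapping in exactly one position, so any two rows either have disjoint supports or share at least \emph{two} columns. But you stop short of turning this into a count, and the route you sketch (spanning tree, one witnessing column per edge, then an induction on leaf rows with delicate bookkeeping) is both incomplete and harder than necessary: counting ``one extra nonzero per witnessing column'' only yields $M+(d-1)$, as you admit, and reusing columns across tree edges makes the double-counting problem real rather than merely technical.

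The step you are missing is to count the extra entries \emph{row by row} rather than column by column. Within a block (or, as the paper does, globally), order the rows $r_1,\ldots,r_d$ so that each $r_j$ with $j\ge 2$ has support overlapping that of some earlier $r_k$, $k<j$, in at least $2$ positions; this ordering exists precisely because the support graph is connected, and the ``at least $2$'' is the orthogonality fact you already identified. Now count: since every column is unit norm it is nonzero, so for each column there is a minimal $j$ with a nonzero entry in row $r_j$ --- that gives $M$ entries. For each $j\ge 2$, the ($\ge 2$) overlap entries of $r_j$ with its earlier partner $r_k$ lie in row $r_j$ and sit in columns that already have a nonzero in row $r_k$ with $k<j$, so they are never the minimal-index entry of their column; hence they are disjoint from the $M$ already counted. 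And since these extra entries live in row $r_j$ itself, entries attributed to different $j$ are automatically distinct --- no spanning-tree edge/column bookkeeping and no induction is needed. This gives $2(d-1)$ genuinely new entries per block and hence $N+2(n-\mu)$ overall after summing over at most $\mu$ blocks. With this replacement for your last two paragraphs, your argument matches the paper's proof.
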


\begin{proof}
We first study the case that $\mu(\lambda_1, \ldots, \lambda_n)=1$. For this, let $T^*$ denote the synthesis matrix of a frame in
$\cF(N,\{\lambda_i\}_{i=1}^{n})$ with respect to a fixed orthonormal basis. For the sake of brevity, in the sequel we will use
the phrase that two rows of $T^*$ {\em have overlap of size} $k$, if the intersection of their supports is a set of size $k$. Note that,
since the rows of $T^*$ are orthogonal, it is not possible that two rows of $T^*$ have overlap $1$.

Fix now an arbitrary row $r_1$ of $T^*$. Since, by Proposition \ref{prop:block}, $T^*$ has block decomposition of order $1$, there
exists a row $r_2$ whose overlap with $r_1$ is of size $\geq 2$. Similarly, there has to exist a row different from $r_1$ and $r_2$
which has overlap of size $\geq 2$ with $r_1$ or $ r_2$. Iterating this procedure will provide an order $r_1, r_2,\dots r_n$ such
that, for each row $r_j$, there exists some $k<j$ such that $r_j$ has overlap of size $\geq 2$ with $r_k$. Since all columns in $T^*$
are unit norm, for each column $c$, there exists a minimal $j$ for which the entry $c_{r_j}$ is non-zero. This yields $N$
non-zero entries in $T^*$. In addition, each row $r_2$ through $r_n$ has at least 2 non-zero entries coming from the overlap,
which are different from the just accounted for $N$ entries, \ah{since these entries \fk{cannot} be the
non-zero entries of minimal index of a column due to the overlap with a previous row.}
This sums up to a total of at least $2(n-1)$ non-zero coefficients.
Consequently, the synthesis matrix has at least $N+2(n-1)$ non-zero entries, as desired.

Finally, suppose $\mu:=\mu(\lambda_1, \ldots, \lambda_n)>1$. By Proposition~\ref{prop:block}, $T^*$ has block decomposition of order
at most $\mu$. Performing the same construction as above, there exist at most $\mu$ rows $r_j$ (including the first one) which do not
have overlap with a row $r_k$, $k<j$. Thus the synthesis matrix $T^*$ must at least contain $N+2(n-\mu)$ non-zero entries.
\end{proof}

It should be mentioned that an optimally sparse frame from $\cF(N,\{\lambda_i\}_{i=1}^{n})$ is in general not uniquely determined.
Various examples can be constructed along the following line: For simplicity, we choose $n=4$ and $N=9$ and construct a tight frame,
i.e., $\lambda_1=\ldots=\lambda_4=\frac{9}{4}$.
Then, by Theorem \ref{theo:maxsparsity}, the maximally achievable sparsity is $9+2(4-1)=15$. The following matrices are synthesis matrices
with respect to the standard unit vector basis of two different frames in $\cF(9, \{\frac{9}{4}\}_{i=1}^9)$, the first in fact being generated by
Spectral Tetris:
\[
\begin{bmatrix}
1&1&\sqrt{\frac{1}{8}}&\sqrt{\frac{1}{8}}&0&0&0&0&0\\
0&0&\sqrt{\frac{7}{8}}&-\sqrt{\frac{7}{8}}&\sqrt{\frac{1}{4}}&\sqrt{\frac{1}{4}}&0&0&0\\
0&0&0&0&\sqrt{\frac{3}{4}}&-\sqrt{\frac{3}{4}}&\sqrt{\frac{3}{8}}&\sqrt{\frac{3}{8}}&0\\
0&0&0&0&0&0&\sqrt{\frac{5}{8}}&-\sqrt{\frac{5}{8}}&1\\
\end{bmatrix}
\]
and
\[
\begin{bmatrix}
1&\sqrt{\frac{5}{8}}&\sqrt{\frac{5}{8}}&0&0&0&0&0&0\\
0&\sqrt{\frac{3}{8}}&-\sqrt{\frac{3}{8}}&\sqrt{\frac{3}{8}}&\sqrt{\frac{3}{8}}&\sqrt{\frac{3}{8}}&\sqrt{\frac{3}{8}}&0&0\\
0&0&0&\sqrt{\frac{5}{8}}&-\sqrt{\frac{5}{8}}&0&0&1&0\\
0&0&0&0&0&\sqrt{\frac{5}{8}}&-\sqrt{\frac{5}{8}}&0&1\\
\end{bmatrix}.
\]


\subsection{Main Result}
\label{subsec:main}

Having set the benchmark, we now prove that frames constructed by Spectral Tetris in fact achieve the optimal
sparsity rate. For this, we would like to remind the reader that the frame constructed by Spectral Tetris
(see Figure \ref{fig:ST}) was denoted by STF$(N; \lambda_1, \ldots, \lambda_n)$.

\begin{theorem} \label{theo:main}
Let $n, N > 0$, and let the real values $\lambda_1,\ldots,\lambda_n \ge 2$ be ordered blockwise and satisfy $\sum_{j=1}^n \lambda_j = N$.
Then the frame STF$(N; \lambda_1, \ldots, \lambda_n)$ is optimally sparse in $\cF(N,\{\lambda_i\}_{i=1}^{n})$ with respect to the standard unit vector basis.
That is, this frame is $N+2(n-\mu(\lambda_1, \ldots, \lambda_n))$-sparse with respect to the standard unit vector basis.
\end{theorem}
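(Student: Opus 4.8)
The plan is to establish the upper bound on sparsity by a direct count of the non-zero entries produced by STF, matching the lower bound $N + 2(n - \mu(\lambda_1,\ldots,\lambda_n))$ from Theorem~\ref{theo:maxsparsity}. Since the lower bound already holds for every frame in $\cF(N,\{\lambda_i\}_{i=1}^n)$, it suffices to show that the synthesis matrix of STF$(N;\lambda_1,\ldots,\lambda_n)$ has \emph{exactly} $N + 2(n - \mu)$ non-zero entries whenever the eigenvalues are ordered blockwise. First I would walk the cursor through the algorithm and classify each step as one of the three cases from Example~\ref{exa:cursor}: Case~1 (when $\lambda_i > 1$) contributes one new non-zero entry and advances the cursor within the same row; Case~2 (when $0 < \lambda_i < 1$) contributes four new non-zero entries forming an orthogonal $2\times 2$ block and advances the cursor two rows down and one column over; Case~3 (when $\lambda_i = 1$) contributes a single new non-zero entry and advances diagonally. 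The key bookkeeping observation is that in each column exactly the entries that are genuinely assigned a value are non-zero (the algorithm sets all remaining entries to zero), so the total count is $\sum_i (\text{number of non-zeros in column } i)$.

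Next I would track the running partial sum $\sigma_\ell = \sum_{j=1}^\ell \lambda_j$ and argue that Case~3 is invoked at row $\ell$ (i.e., the cursor leaves row $\ell$ cleanly via a diagonal move, with $\lambda_\ell$ having been exhausted to exactly $1$ and then to $0$ by a single unit entry) precisely when $\sigma_\ell \in \NN$; by the definition of maximal block number and the blockwise-ordering hypothesis, this happens for exactly $\mu = \mu(\lambda_1,\ldots,\lambda_n)$ values of $\ell \in \{1,\ldots,n\}$ (noting $\sigma_n = N \in \NN$ always, so the last row is always such a case). At the remaining $n - \mu$ rows, the cursor exits via Case~2, producing a $2\times 2$ orthogonal block whose two "lower" entries spill into the next row. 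I would then set up the count: each of the $N$ columns has at least one non-zero, giving $N$; and each Case~2 exit from a row contributes exactly $2$ \emph{extra} non-zero entries beyond the one-per-column baseline (these are the lower two entries of the $2\times 2$ block, which are the second non-zeros of their respective columns and hence not counted in the per-column minimum). Since there are $n - \mu$ such exits, the total is $N + 2(n-\mu)$, matching the lower bound; hence STF is optimally sparse.

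The cleanest way to organize the extra-entry count is to process the algorithm block by block: the blockwise ordering splits $\{1,\ldots,n\}$ into $\mu$ consecutive segments on each of which the partial sums first become integral at the segment's end, and the synthesis matrix is block-diagonal across these segments (this is exactly the block decomposition from Proposition~\ref{prop:block}, now attained with equality). Within a single block spanning rows $\ell,\ldots,\ell'$ with $\sigma_{\ell-1}, \sigma_{\ell'} \in \NN$ but no intermediate partial sum integral, the cursor enters row $\ell$ fresh and exits rows $\ell, \ell+1, \ldots, \ell'-1$ each via Case~2 and finally row $\ell'$ via Case~3; so this block contributes $\lambda_\ell + \cdots + \lambda_{\ell'}$ columns (an integer, call it $N_{\text{block}}$), and the number of non-zeros in it is $N_{\text{block}} + 2 \cdot (\text{number of rows in block} - 1)$. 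Summing over all $\mu$ blocks gives $N + 2(n - \mu)$. I would also need the small verification that \emph{within} a block the cursor genuinely never hits Case~3 prematurely and never runs out of columns or rows before finishing the block — this follows because a premature Case~3 at row $k$ would force $\sigma_k \in \NN$, contradicting the block structure, and because $\lambda_j \ge 2$ guarantees the algorithm always has enough weight to keep the cursor moving forward (so that in particular each row receives at least one unit entry before any fractional splitting, keeping the $2\times 2$ blocks well-defined with strictly positive entries).

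The main obstacle I anticipate is the careful case analysis showing that the cursor behavior is exactly as described — specifically, proving that the only mechanism by which the cursor advances a row without creating a $2\times 2$ spillover block is Case~3, and that Case~3 occurs at row $\ell$ if and only if $\sigma_\ell \in \NN$. One must track the invariant that, just before the cursor first enters row $j$, the current value of the variable $\lambda_j$ equals $\sigma_j - \lceil \sigma_{j-1} \rceil$ (or an equivalent formula accounting for the carried-over deficit $2 - \lambda_{j-1}$ from the previous Case~2 step), and then check that this quantity is exactly an integer iff $\sigma_{j-1}$ was an integer, forcing the subsequent exit through Case~3 rather than Case~2. Handling the boundary/degenerate situations — e.g., a block consisting of a single row, or consecutive integral partial sums — requires some care but is routine once the invariant is pinned down. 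Everything else is a matter of adding up the per-column contributions and invoking Theorem~\ref{theo:maxsparsity} for the matching lower bound.
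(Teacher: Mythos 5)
Your proposal is correct and follows essentially the same route as the paper's proof: both establish the matching upper bound by running the cursor through STF, observing that the blockwise ordering makes the integral partial sums split the synthesis matrix into $\mu$ blocks, and counting $N$ baseline non-zeros (one per column) plus two extra entries for each of the $n-\mu$ within-block Case-2 row transitions, then invoking Theorem~\ref{theo:maxsparsity} for the lower bound. The cursor invariant you flag as the main obstacle (Case~3 at row $\ell$ iff $\sum_{j\le\ell}\lambda_j\in\NN$) is exactly the fact the paper uses, stated there via comparing STF on prefixes of the eigenvalue sequence.
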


\begin{proof}
Let $(\varphi_i)_{i=1}^N$ be the frame STF$(N; \lambda_1,\ldots,\lambda_n)$. We will first show that its synthesis matrix
has block decomposition of order $\mu:=\mu(\lambda_1, \ldots, \lambda_n)$. For this, let $k_0=0$, and let $k_1,\ldots,k_{\mu}\in\mathbb{N}$
be chosen such that $\ah{m_i:=\sum_{j=1}^{k_i}\lambda_j}$ is an integer for every $i=1,\ldots,\mu$. Moreover, let $m_0=0$.
Further, note that $k_{\mu}=n$ and $m_{\mu}=N$, since $\sum_{j=1}^n \lambda_j$ is an integer by hypothesis. The steps of Spectral Tetris (STF)
for computing STF$(m_1; \lambda_1, \ldots, \lambda_{k_1})$ and STF$(N; \lambda_1, \ldots, \lambda_n)$ \ah{coincide until the cursor }\fk{index}\ah{ in
computing STF$(N; \lambda_1, \ldots, \lambda_n)$ reach}\fk{es}\ah{ $(k_1,m_1)$}. Therefore, the first $k_1$ entries of the first $m_1$
vectors of both constructions coincide. Continuing the computation of STF$(N; \lambda_1, \ldots, \lambda_n)$ will set the remaining
entries of the first $m_1$ vectors and also the first $k_1$ entries of the remaining vectors to zero. Thus, any of the first
$k_1$ vectors has disjoint support from any of the vectors constructed later on. Repeating this argument for $k_2$ until $k_{\mu}$,
we obtain that the synthesis matrix has a block decomposition of order $\mu$; the corresponding partition of the frame vectors being
\[
\bigcup_{i=1}^{\mu}\{\varphi_{m_{i-1}+1},\ldots,\ah{\varphi_{m_{i}}}\}.
\]

To compute the number of non-zero entries in the synthesis matrix generated by Spectral Tetris, we let $i\in\{1,\ldots,\mu\}$ be
arbitrarily fixed and compute the number of non-zero entries of the vectors $\varphi_{m_{i-1}+1},\ldots,\ah{\varphi_{m_{i}}}$.
Spectral Tetris ensures that each of the rows $k_{i-1}+1$ up to $k_{i}-1$ intersects the support of the subsequent row on a set
of size $2$, \ah{since in these rows }\fk{STF will always proceed as }\ah{in case $2$ of the three cases} \fk{in}\ah{ the spectral tetris example above.}
Thus, there exist $2(k_{i}-k_{i-1}-1)$ frame vectors with two non-zero entries. The remaining
$\ah{(m_i-m_{i-1})}-2(k_i-k_{i-1}-1)$
frame vectors will have only one entry, yielding a total number of $\ah{(m_i-m_{i-1})}+2(k_i-k_{i-1}-1)$ non-zero entries in the vectors
$\varphi_{m_{i-1}+1},\ldots,\ah{\varphi_{m_{i}}}$.

Summarizing, the total number of non-zero entries in the frame vectors of $(\varphi_i)_{i=1}^N$ is
\[
\sum_{i=1}^{\mu}\ah{(m_i-m_{i-1})}+2(k_i-k_{i-1}-1)
=\left(\sum_{i=1}^{\mu}\ah{(m_i-m_{i-1})}\right)+
2\left(k_{\mu}-\left(\sum_{i=1}^{\mu}1\right)\right)
=N+2(n-\mu),
\]
which is by Theorem \ref{theo:maxsparsity} the maximally achievable sparsity.
\end{proof}

The reader will have realized that Spectral Tetris generates frames which are `only' optimally sparse with
respect to the standard unit vector basis. This seems at first sight like a drawback. However, if sparsity with respect
to a different orthonormal basis is required, Spectral Tetris can easily be modified to accommodate this request by using
vectors of this orthonormal basis instead of the standard unit vector basis when filling in the frame vectors in Steps 5,
6 and 11 in STF (cf. Figure \ref{fig:ST}). It is a straightforward exercise to show that this modified Spectral Tetris algorithm
then generates a frame which is optimally sparse with respect to this new orthonormal basis.


\subsection{Special Case: Constructing Optimally Sparse Tight Frames}
\label{subsec:tight}

In the special case of equal eigenvalues, i.e., of tight frames, with $N$ elements in $\RR^n$, all eigenvalues need to
equal $\frac{N}{n}$ for the equality $\sum_{j=1}^n \lambda_j = N$ to be satisfied. The maximal block number
can be easily computed to be $\gcd(N,n)$. Theorem \ref{theo:main} then takes the following form:

\begin{corollary}
For $n, N > 0$, the frame STF$(N; \frac{N}{n}, \ldots, \frac{N}{n})$ is optimally sparse in
$\cF(N,\{\frac{N}{n}\}_{i=1}^N)$ with respect to
the standard unit vector basis.
That is, this frame is $N+2(n-\gcd(N,n))$-sparse with respect to the standard unit vector basis.
\end{corollary}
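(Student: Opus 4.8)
The plan is to derive the Corollary directly from Theorem \ref{theo:main} by specializing to the case of equal eigenvalues, so the only real content is verifying that $\mu(\frac{N}{n},\ldots,\frac{N}{n}) = \gcd(N,n)$ and checking that the blockwise-ordering hypothesis of Theorem \ref{theo:main} is automatically satisfied when all eigenvalues are equal. First I would observe that since $\sum_{j=1}^n \lambda_j = N$ forces each $\lambda_j = \frac{N}{n}$, and since all eigenvalues coincide, every permutation $\pi$ of $\{1,\ldots,n\}$ leaves the sequence $(\lambda_{\pi(j)})_{j=1}^n$ unchanged; hence the set of partial sums is permutation-independent and the sequence $\frac{N}{n},\ldots,\frac{N}{n}$ is trivially ordered blockwise. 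Thus Theorem \ref{theo:main} applies verbatim, and STF$(N;\frac{N}{n},\ldots,\frac{N}{n})$ is $N + 2(n - \mu(\frac{N}{n},\ldots,\frac{N}{n}))$-sparse and optimally so in $\cF(N,\{\frac{N}{n}\}_{i=1}^n)$ with respect to the standard unit vector basis.

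It then remains to compute the maximal block number. Here the partial sums are $\sum_{j=1}^s \frac{N}{n} = \frac{sN}{n}$ for $s = 1,\ldots,n$, and $\mu$ is simply the count of those $s \in \{1,\ldots,n\}$ for which $\frac{sN}{n} \in \ZZ$. Writing $d = \gcd(N,n)$, $N = d N'$, $n = d n'$ with $\gcd(N',n') = 1$, we have $\frac{sN}{n} = \frac{sN'}{n'}$, which is an integer if and only if $n' \mid s N'$, i.e. (by coprimality of $N'$ and $n'$) if and only if $n' \mid s$. The number of multiples of $n'$ in $\{1,\ldots,n\} = \{1,\ldots,d n'\}$ is exactly $d = \gcd(N,n)$. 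Hence $\mu(\frac{N}{n},\ldots,\frac{N}{n}) = \gcd(N,n)$, and substituting into the sparsity count from Theorem \ref{theo:main} yields $N + 2(n - \gcd(N,n))$, which is the claimed value.

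I do not anticipate a genuine obstacle here: the statement is a corollary in the strict sense, and every ingredient — the reduction of all eigenvalues to $\frac{N}{n}$, the triviality of blockwise ordering under equal eigenvalues, and the elementary number-theoretic count of $s$ with $n \mid sN$ — is routine. The one point worth stating carefully is the coprimality argument showing $n' \mid sN' \iff n' \mid s$, which is where $\gcd$ enters; everything else is just bookkeeping against Theorems \ref{theo:maxsparsity} and \ref{theo:main}. One could alternatively phrase the whole proof as: ``By Theorem \ref{theo:main} it suffices to show $\mu(\frac{N}{n},\ldots,\frac{N}{n}) = \gcd(N,n)$,'' and then give the two-line count above.
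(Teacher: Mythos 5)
Your proposal is correct and follows exactly the route the paper intends: the paper simply asserts that the maximal block number ``can be easily computed to be $\gcd(N,n)$'' and invokes Theorem \ref{theo:main}, while you supply the (correct) elementary verification that equal eigenvalues are trivially ordered blockwise and that the number of $s\in\{1,\ldots,n\}$ with $sN/n\in\ZZ$ equals $\gcd(N,n)$. No gaps; this is the same argument with the routine details written out.
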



\section{Conclusions and Discussion}
\label{sec:conclusions}

In this paper we considered the design of frames which enable efficient computations of the associated frame measurements.
This led to the introduction of the notion of a sparse frame as well as a sparsity measure for such frames, thereby
introducing optimal sparsity as a new paradigm into the construction of frames. We then analyzed an extended version of
Spectral Tetris for frames and proved that the frames constructed by this algorithm are indeed optimally sparse.
This shows that Spectral Tetris can serve as an algorithm for computing frames with this desirable property, and
our results prove that it is not possible to derive sparser frames through a different procedure.

We would finally like to point out that the analysis in this paper leads to several intriguing open problems for
future research; a few examples are stated in the sequel.

\bitem
\item {\it Eigenvalues also smaller than $2$.} It is still an open problem whether and how Spectral Tetris extends
to sets of eigenvalues, if some eigenvalues are smaller than $2$. \gk{The extension of the} Spectral Tetris
\gk{algorithm} by inserting larger DFT matrices than the previously exploited $2 \times 2$-matrices, \gk{allowed }\fk{ for }\gk{
some partial results (see \cite{CFH10}). }
However, from the results in \cite{CFH10} it can be deduced that this procedure does not always lead to optimally
sparse frames even in the case when all eigenvalues are equal, i.e., the tight frame case. Hence, extensive research
will be necessary to introduce an appropriate -- in the sense of optimal sparsity -- extension of Spectral Tetris.

\item {\it Extension to other classes of frames.} Depending on the application, other desiderata might be requested
from a frame such as, for instance, equi-angularity. For such a class of frames, the question of an optimally
sparse frame can and should similarly be posed.

\item {\it Relative sparsity/Compressibility.} Taking numerical considerations and perturbations into account, it
will be necessary to extend the notion of sparsity to relative sparsity/compressibility for frames and analyze
optimality for such.


\eitem


\section*{Acknowledgement}

The authors would like to thank Ali Pezeshki for enlightening discussions on this topic. We are also grateful to
the anonymous referees for valuable comments and suggestions.
The first and second author were supported by the grant AFOSR F1ATA00183G003, NSF 1008183, and  DTRA/ NSF 1042701.
Part of this work was completed while the second author visited the Institute of Mathematics at University
of Osnabr\"uck. This author would like to thank this institute for its hospitality and support during his visit.
The second and third author gratefully acknowledge the support of the Institute of Advanced Study through the Park
City Math Institute, where part of this work was completed. The third author also acknowledges the support of the
Hausdorff Center for Mathematics.
The fourth author acknowledges support by DFG Grant SPP-1324, KU 1446/13 and DFG Grant, KU 1446/14. She would like
to thank the Department of Mathematics at University of Missouri for its hospitality and support during her visit,
which enabled completion of this work.

\bibliographystyle{plain}
\bibliography{OSFbibfile}

\end{document}